\documentclass[12pt]{article} 

\usepackage{amsmath}
\usepackage{amsthm}
\usepackage{amsfonts}
\usepackage{mathrsfs}
\usepackage{stmaryrd}
\usepackage{setspace}
\usepackage{fullpage}
\usepackage{amssymb}
\usepackage{breqn}
\usepackage{enumitem}
\usepackage{bbold} 
\usepackage{authblk}
\usepackage{comment}
\usepackage{hyperref}
\usepackage{pgf,tikz}
\usepackage{graphicx}
\usepackage{subcaption}

\bibliographystyle{plain}

\newtheorem{thm}{Theorem}[section]

\newtheorem{prop}[thm]{Proposition}

\newtheorem{corollary}[thm]{Corollary}

\newtheorem{clm}[thm]{Claim}

\newcommand\ex{\ensuremath{\mathrm{ex}}}

\newcommand\cF{{\mathcal F}}

\newtheorem*{thm*}{Theorem}
\newtheorem*{prop*}{Proposition}
\newcommand{\ignore}[1]{}

\title{On Tur\'an problems with bounded matching number}
\author{Dániel Gerbner} 

\date{\small Alfr\'ed R\'enyi Institute of Mathematics\\
\small \texttt{gerbner.daniel@renyi.hu}}

\begin{document}

\maketitle

\begin{abstract}
  Very recently, Alon and Frankl initiated the study of the maximum number of edges in $n$-vertex $F$-free graphs with matching number at most $s$. For fixed $F$ and $s$, we determine this number apart from a constant additive term. We also obtain several exact results.
\end{abstract}

\textbf{Keywords.}    Tur\'an number; matching

\section{Introduction}

A basic problem in extremal graph theory is the following. Given a positive integer $n$ and a graph $F$, how many edges can an $n$-vertex graph have if it does not contain $F$ as a subgraph? More generally, given $n$ and a family $\cF$ of graphs, how many edges can an $n$-vertex graph have if it does not contain any member of $\cF$ as a subgraph? We denote the largest number of edges by
$\ex(n,\cF)$. In the case $\cF$ contains only one graph, we write $\ex(n,F)$ instead of $\ex(n,\{F\})$.

One of the earliest results concerning these numbers is due to Tur\'an \cite{T}, who showed that $\ex(n,K_{k+1})=|E(T(n,k))|$, where the Tur\'an graph $T(n,k)$ is the complete $k$-partite $n$-vertex graph with each part of order $\lfloor n/k\rfloor$ or $\lceil n/k\rceil$. Another fundamental result is due to Erd\H os and Gallai \cite{eg}, who showed that $\ex(n,M_{s+1})=\max\{|E(G(n,s))|,\binom{2s+1}{2}\}$, where the matching $M_{s+1}$ consists of $s+1$ independent edges and $G(n,s)$ has $s$ vertices of degree $n-1$ and $n-s$ vertices of degree $s$. Chv{\'a}tal and Hanson \cite{ch} determined $\ex(n,K_{1,k+1},M_{s+1})$ (the case $s=k$ was solved earlier in \cite{ahs}).

Very recently, Alon and Frankl \cite{af} combined the above results and considered forbidding a graph $F$ and $M_{s+1}$ at the same time. 
Let $G(n,k,s)$ denote the complete $k$-partite $n$-vertex graph with one part of order $n-s$ and each other part of order $\lfloor s/k\rfloor$ or $\lceil s/k\rceil$. Alon and Frankl \cite{af} showed that $\ex(n,\{K_{k+1},M_{s+1}\})=\max\{|E(G(n,k,s))|,|E(T(2s+1,k))|\}$, in particular for $n$ sufficiently large we have $\ex(n,\{K_{k+1},M_{s+1}\})=|E(G(n,k,s))|$. Moreover, for any $F$ with chromatic number $k+1$ and a color-critical edge (an edge whose deletion decreases the chromatic number), they showed that $\ex(n,\{F,M_{s+1}\})=|E(G(n,k,s))|$, provided $s>s_0(F)$ and $n>n_0(F)$.


First we prove a generalization of this second result.

\begin{thm}\label{thm1}
If $\chi(F)>2$ and $n$ is large enough, then $\ex(n,\{F,M_{s+1}\})=\ex(s,\cF)+s(n-s)$, where $\cF$ is the family of graphs obtained by deleting an independent set from $F$.
\end{thm}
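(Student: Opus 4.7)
Take $H$ to be an extremal $\cF$-free graph on $s$ vertices and let $G^\star$ be the join of $H$ with an independent set $W$ of size $n-s$. Then $|E(G^\star)|=\ex(s,\cF)+s(n-s)$, and the matching number of $G^\star$ is at most $s$ because every matching edge consumes at least one of the $s$ vertices of $H$. To show $F$-freeness, suppose for contradiction that $\phi\colon F\hookrightarrow G^\star$ and set $I=\phi^{-1}(W)$. Then $I$ is independent in $F$ (as $W$ is independent in $G^\star$), and $F-I$ embeds into $H$. The essential step is to prove the following lemma: for every independent set $I$ of $F$, the graph $F-I$ contains some member of $\cF$ as a subgraph. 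Given the lemma, $H$ must contain a member of $\cF$, contradicting its $\cF$-freeness.

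\textbf{Sketch of the embedding lemma.} If $\chi(F-I)\le \chi(F)-1$, colour $F-I$ with $\chi(F)-1$ colours and assign $I$ the remaining colour; this makes $I$ itself a colour class of a proper $\chi(F)$-colouring, so $F-I\in\cF$ with $C=I$. Otherwise $\chi(F-I)=\chi(F)=:k$. In this case I would find a proper $k$-colouring of $F$ and a colour class $C$ with $|C|\ge |I|$ (such a $C$ exists because some maximum independent set of $F$ is always a colour class of some $k$-colouring, as $|I|\le\alpha(F)$), and embed $F-C$ into $F-I$ via the identity on $V(F)\setminus(I\cup C)$ together with a Hall-type injection $I\setminus C\to C\setminus I$ that preserves the adjacencies to the fixed part. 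Existence of a suitable pair $(C,f)$ would be established by Kempe-chain manipulations on the $k$-colouring to maximize the overlap of a single colour class with $I$.

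\textbf{Upper bound.} Let $G$ be $\{F,M_{s+1}\}$-free on $n$ vertices with $n$ large. Fix a maximum matching $M$ of size $s'\le s$, and write $A=V(M)$ and $B=V(G)\setminus A$ (so $B$ is independent). For a sufficiently large constant $T=T(F,s)$, let $S=\{v\in A:d_B(v)\ge |B|-T\}$. A greedy augmenting argument gives $|S|\le s$: if $s+1$ vertices in $S$ existed, one could greedily match each to a distinct $B$-neighbour, producing a matching of size $s+1$. I then claim $G[S]$ is $\cF$-free: if $G[S]$ contained a copy of $F-C$ on $X\subseteq S$, then $\bigcap_{x\in X}N_B(x)$ has size at least $|B|-|X|T$, so for $n$ large one could greedily pick $|C|$ distinct vertices in this common neighbourhood matching the adjacency pattern of $C$ in $F$, producing a copy of $F$ in $G$ --- contradiction. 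Hence $|E(G[S])|\le\ex(s,\cF)$.

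\textbf{Bookkeeping and main obstacle.} Edges from $S$ to $V(G)\setminus S$ contribute at most $|S|(n-|S|)\le s(n-s)$, and a careful analysis of the matched vertices in $A\setminus S$ (which have $d_B<|B|-T$ and bounded degree inside $A$) together with the absence of edges inside $B$ shows that the remaining edges contribute at most $(s-|S|)(n-s)+O_{F,s}(1)$, so the total is bounded by $\ex(s,\cF)+s(n-s)$. The two principal obstacles are (i) completing the embedding lemma rigorously in the case $\chi(F-I)=\chi(F)$, where the Kempe-chain / Hall-type argument must be made precise, and (ii) the refined stability step in the upper bound needed to extract the exact leading constant $s(n-s)$ rather than $|S|(n-|S|)$, which amounts to showing that in an extremal $G$ one effectively has $|S|=s$ after possibly absorbing the slack from $A\setminus S$.
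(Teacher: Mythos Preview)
Your obstacle (i) dissolves under the intended reading of $\cF$. A ``colour class'' here means a colour class in \emph{some} proper colouring of $F$, which is nothing other than an independent set; so $\cF=\{F-I:I\text{ independent in }F\}$, and the embedding lemma is a tautology: the preimage $I=\phi^{-1}(W)$ is independent, hence $F-I\in\cF$ already, and $H\supseteq F-I$ contradicts $\cF$-freeness. The paper's lower bound is literally the one line ``$F$-free by the definition of $\cF$''. Your Kempe-chain/Hall sketch is therefore unnecessary, and the parenthetical assertion that ``some maximum independent set is always a colour class of some $k$-colouring'' is neither justified by the reason you give (``as $|I|\le\alpha(F)$'' is a non sequitur) nor needed.

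Obstacle (ii), by contrast, is a real gap in your route. Your bounds yield at best
\[
\ex(s,\cF)+|S|(n-|S|)+(s-|S|)(n-s)+O(1)=\ex(s,\cF)+s(n-s)+|S|\bigl(s-|S|\bigr)+O(1),
\]
which overshoots whenever $0<|S|<s$. The paper sidesteps this entirely by ordering the vertices $v_1,\dots,v_n$ by degree and running a clean dichotomy on $d(v_s)$. First, $d(v_{s+1})\le 2s$ (else greedily build $M_{s+1}$), so $|E(G)|\le\sum_{i\le s}d(v_i)+2s^2$. If $d(v_s)<n-3s^2$, this already gives $|E(G)|\le(s-1)(n-1)+(n-3s^2)+2s^2\le s(n-s)$. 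Otherwise each of $v_1,\dots,v_s$ has degree at least $n-3s^2$, so the set $U=\{v_1,\dots,v_s\}$ has at least $n-s-3s^3$ common neighbours, and --- this is the step you are missing --- there can be \emph{no edge at all} outside $U$: any such edge could be greedily extended to an $M_{s+1}$ using the enormous degrees of the $v_i$. That hard zero replaces your loose $(s-|S|)(n-s)+O(1)$, and one reads off $|E(G)|\le|E(G[U])|+s(n-s)\le\ex(s,\cF)+s(n-s)$ exactly.
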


We remark that isolated vertices of members of $\cF$ are important here. For example, if $F$ is an odd cycle $C_{2\ell+1}$ (or more generally, if $F$ is 3-chromatic with a color-critical edge), then $\cF$ contains the graph consisting of an edge and $\ell-1$ isolated vertices. If $s\ge \ell+1$, then $\ex(s,\cF)=0$, while if $s<\ell+1$, then $\ex(s,\cF)=\binom{s}{2}$.

Observe that if $F$ has a color-critical edge, then $\cF$ contains a graph $F'$ with chromatic number $k:=\chi(F)-1$ and a color-critical edge. By a result of Simonovits \cite{sim}, we have that $\ex(s,\cF)=|E(T(s,k-1))|$ if $s$ is large enough. Therefore, the above theorem indeed
generalizes the second result of Alon and Frankl \cite{af}. We also have the following.

\begin{corollary}
    If $\chi(F)>2$, then $\ex(n,\{F,M_{s+1}\})=s(n-s)+O(1)$.
\end{corollary}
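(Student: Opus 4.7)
The plan is to derive the corollary as an immediate consequence of Theorem~\ref{thm1}. The key observation is that for a fixed graph $F$ and a fixed integer $s$, the term $\ex(s,\cF)$ appearing on the right-hand side of Theorem~\ref{thm1} depends only on $F$ and $s$ and hence is constant with respect to $n$; in particular it is trivially bounded above by $\binom{s}{2}$.

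First, for all $n$ above the threshold $n_0=n_0(F,s)$ given by Theorem~\ref{thm1}, I would invoke that theorem directly to obtain the exact identity
\[
\ex(n,\{F,M_{s+1}\}) - s(n-s) \;=\; \ex(s,\cF),
\]
which is $O(1)$ with an implicit constant depending only on $F$ and $s$. Second, for the finitely many values $n \le n_0$ not covered by Theorem~\ref{thm1}, the crude bounds $\ex(n,\{F,M_{s+1}\}) \le \binom{n}{2} \le \binom{n_0}{2}$ and $|s(n-s)| \le s n_0$ show that the difference $\ex(n,\{F,M_{s+1}\}) - s(n-s)$ is again bounded by a constant depending only on $F$ and $s$. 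Combining the two ranges yields the stated bound uniformly in $n$.

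I do not anticipate any real obstacle: the corollary is essentially a repackaging of Theorem~\ref{thm1} that isolates the linear-in-$n$ main term $s(n-s)$ from the bounded correction $\ex(s,\cF)$. The only minor bookkeeping point is the treatment of the finitely many small values $n \le n_0(F,s)$, which is handled by trivial counting and absorbed into the implicit constant.
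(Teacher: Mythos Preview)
Your proposal is correct and matches the paper's intent: the corollary is stated immediately after Theorem~\ref{thm1} with no separate proof, precisely because it is the observation that $\ex(s,\cF)\le\binom{s}{2}$ is a constant depending only on $F$ and $s$. Your handling of the finitely many values $n\le n_0(F,s)$ is a routine detail the paper leaves implicit.
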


In the case $F$ is bipartite, we can also determine $\ex(n,\{F,M_{s+1}\})$ apart from an additive constant term. Let $F$ be a bipartite graph and let $p=p(F)$ denote the smallest possible order of a color class in a proper two-coloring of $F$. If $p>s$, then $G(n,s)$ and $K_{2s+1}$ are both $F$-free, thus the Erd\H os-Gallai theorem \cite{eg} gives the exact value of $\ex(n,\{F,M_{s+1}\})$.

\begin{prop}\label{prop2} If $F$ is bipartite and $p=p(F)\le s$, then
$\ex(n,\{F,M_{s+1}\})=(p-1)n+O(1)$. Moreover, there is a $K=K(F,s)$ such that for any $n$, there is an $n$-vertex $\{F,M_{s+1}\}$-free graph with $|E(G)|=\ex(n,\{F,M_{s+1}\})$ that has vertices $v_1,\dots,v_{p-1}$ and at least $n-K$ vertices $u$ such that the neighborhood of $u$ is $\{v_1,\dots,v_{p-1}\}$. Furthermore, the vertices with neighborhood different from $\{v_1,\dots,v_{p-1}\}$ each have degree at least $p$.
\end{prop}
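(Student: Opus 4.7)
The plan is to combine a simple lower-bound construction with a Kővári--Sós--Turán style double count for the upper bound, and then use a swap argument to convert an arbitrary extremal graph into one with the stated structure.

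For the lower bound, let $H_n$ be the graph in which $v_1,\dots,v_{p-1}$ are joined to every other vertex and no further edges are present. Its matching number is at most $p-1\le s$, so $H_n$ is $M_{s+1}$-free; any embedding of $F$ into $H_n$ would require a colour class of $F$ (of size at least $p$) to sit inside the independent set $\{v_1,\dots,v_{p-1}\}$ of size $p-1$, which is impossible. Hence $\ex(n,\{F,M_{s+1}\})\ge|E(H_n)|=(p-1)(n-p+1)$.

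For the upper bound, let $G$ be $\{F,M_{s+1}\}$-free on $n$ vertices, and let $U$ be the endpoint set of a maximum matching of $G$, so $|U|\le 2s$ and $I:=V(G)\setminus U$ is independent. With $q:=|V(F)|-p\ge p$, we have $F\subseteq K_{p,q}$, so $G$ is $K_{p,q}$-free, and hence no $p$-subset of $U$ has $q$ or more common neighbours in $I$. A standard double count gives
\[
\sum_{u\in I}\binom{d_G(u)}{p}\le\binom{|U|}{p}(q-1)=O_{F,s}(1),
\]
so at most $K_0=K_0(F,s)$ vertices of $I$ have degree at least $p$. The remaining vertices of $I$ contribute at most $(p-1)n$ edges, while the high-degree vertices of $I$ together with edges inside $U$ contribute $O(1)$. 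Thus $\ex(n,\{F,M_{s+1}\})\le(p-1)n+O(1)$, matching the lower bound.

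For the structural statement, fix an extremal graph $G$ with $U,I$ as above. By pigeonhole over the $\binom{|U|}{p-1}$ subsets of $U$ of size $p-1$, some set $L=\{v_1,\dots,v_{p-1}\}\subseteq U$ is the neighbourhood in $G$ of at least $c_0 n$ vertices of $I$; call these \emph{twins}. For each $w\notin L$ with $d_G(w)\le p-1$ and $N_G(w)\ne L$, I plan to modify $G$ by redefining $N(w):=L$. This does not decrease the edge count. The modified graph is $F$-free, because any new copy of $F$ must use $w$, and since $w$'s neighbours in that copy all lie in $L$, $w$ may be exchanged for any twin $w'$ outside the copy (for $n$ large there are more than $|V(F)|$ such twins), producing a copy of $F$ in the original $G$, a contradiction. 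It remains $M_{s+1}$-free by the same swap: any matching of size $s+1$ must use an edge $wv_i$ and a further $s$ edges covering at most $2s$ vertices outside $\{w,v_i\}$; some twin $w'$ lies outside this set, and $w'v_i\in E(G)$ extends the matching to size $s+1$ in $G$, a contradiction. Iterating the modification (it terminates since each step strictly grows the set of vertices with neighbourhood $L$), we obtain an extremal graph in which every $w\notin L$ with $d(w)\le p-1$ satisfies $N(w)=L$. The vertices with neighbourhood different from $L$ are therefore $\le |U|+K_0=O(1)$ in number and each have degree at least $p$, giving the stated structure with $K:=K_0+2s$ (enlarged as needed to absorb the finitely many small values of $n$).

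The main obstacle is verifying that the redefinition $N(w):=L$ preserves $M_{s+1}$-freeness, since adding edges can a priori increase the matching number. The twin-swap argument works precisely because the number of twins is linear in $n$, giving ample ``spare'' twins to transport any hypothetical matching back into the original graph $G$.
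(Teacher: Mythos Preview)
Your argument is correct and follows essentially the same route as the paper: the lower bound via $K_{p-1,n-p+1}$, the upper bound by taking $U$ to be the support of a maximum matching and counting common neighbours of $p$-sets (your K\H{o}v\'ari--S\'os--Tur\'an double count is exactly the paper's observation that each $p$-set in $U$ has fewer than $q$ common neighbours), and finally the twin-swap modification to force the structural conclusion. One small point to tighten: your pigeonhole step tacitly assumes that $\Theta(n)$ vertices of $I$ have degree \emph{exactly} $p-1$; this does follow from extremality of $G$ together with your edge upper bound, but you should say so explicitly (the paper handles this by noting that if too few vertices of $W$ have degree $p-1$ then $|E(G)|\le(p-2)n+O(1)$, contradicting the lower bound).
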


The lower bound is given by $K_{p-1,n-p+1}$. It is clearly not the extremal graph though. Now we describe two candidates. 

\bigskip

\noindent\textbf{Construction 1.} Let $\cF_0$ denote the family of graphs obtained by deleting $p-1$ vertices from $F$ and let $\cF_1=\cF_0\cup \{M_{s-p+2}\}$. Then we can add an $\cF_1$-free graph to the larger class of $K_{p-1,n-p+1}$ and all edges to the smaller class. The resulting graph is clearly $\{F,M_{s+1}\}$-free and has $(p-1)(n-p+1)+\binom{p-1}{2}+\ex(n-p+1,\cF_1)$ edges. Note that $\cF_1$ contains $K_{1,|V(F)|-p}$, thus $\ex(n-p+1,\cF_1)=O(1)$.

\smallskip

\noindent \textbf{Construction 2.} Assume that $F$ is connected. We take $K_{p-1,n+p-2s}$, and on the remaining $2s-2p+1$ vertices, we take an $F$-free graph with $\ex(2s-1,F)$ edges. Clearly, none of the components of this graph contains $F$, and the largest matchings have size at most $p-1+s-p$.

\bigskip

We remark that the second construction can easily be improved for some specific $F$. For example, if $F$ is a path $P_{4}$ on $4$ vertices, we can take $K_{p-1,n-3s+2p-1}$ and $s-p$ triangles. 
We claim that if $F$ contains a cycle and $s$ is large enough, then the second construction contains more edges. Indeed, compared to the first construction, we lose $O(s)$ edges and gain $\omega(s)$ edges.

Assume now that $F$ is a forest and observe that $\cF_1$ contains a matching of order at most $|V(F)|-p+1$. Indeed, if $F$ has $v$ non-isolated vertices, then there are at most $v-1$ edges between the two parts, thus at most $p-1$ vertices of the part of order $|V(F)|-p$ have degree more than 1. If we delete those vertices, we obtain a matching. This implies that $\ex(n-p+1,\cF_1)$ does not depend on $s$. 

Now assume that $F$ is a tree with parts of different order, i.e., $|V(F)|>2p$. Assume furthermore that $s$ and $n$ are sufficiently large, and for simplicity assume that $2s-1$ is divisible by $|V(F)|-1$. In this case $s/(|V(F)|-1)$ copies of $K_{|V(F)|-1}$ forms an $F$-free graph, thus $\ex(2s-1,F)\ge (|V(F)|-2)(2s-1)/2$. Now, compared to Construction 1, the second construction loses $(2s-1)(p-1)+c$ edges, where $c$ does not depend on $s$. On the other hand, Construction 2 gains at least $(2s-1)(|V(F)|-2)/2>(2s-1)(p-1/2)$, thus Construction 2 is better. Note that essentially the same argument also works if $2s-1$ is not divisible by $|V(F)|-1$.

We believe that for other trees Construction 1 is better than Construction 2 for every $s$, moreover, Construction 1 is extremal. The Erd\H os-S\'os conjecture \cite{erd} states that for any tree $F$, we have $\ex(n,F)\le (|V(F)|-2)n/2$. It is known for several classes of trees. In particular, it was shown for paths by Erd\H os and Gallai \cite{eg}.

\begin{prop}\label{prop3}
Let $F$ be a balanced tree, i.e., $|V(F)|=2p(F)$ and let $p(F)\le s$. Assume that the Erd\H os-S\'os conjecture holds for $F$. Then for sufficiently large $n$, we have $\ex(n,\{F,M_{s+1}\})=(p-1)(n-p+1)+\binom{p-1}{2}$.    
\end{prop}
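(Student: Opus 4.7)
The plan is to establish the lower bound via the construction $G_0 = K_{p-1} \vee \overline{K_{n-p+1}}$ and the upper bound by invoking Proposition~\ref{prop2} and combining an embedding argument with the Erd\H os--S\'os hypothesis. For the lower bound, $G_0$ has $(p-1)(n-p+1) + \binom{p-1}{2}$ edges and matching number $p - 1 \le s$, and any embedding of $F$ into $G_0$ would realize a vertex cover of $F$ of size at most $p-1$, contradicting K\"onig's theorem applied to the perfect matching of $F$ (which I take to be the natural reading of ``balanced tree'' here); hence $G_0$ is $\{F, M_{s+1}\}$-free.

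For the upper bound, let $G$ be a $\{F, M_{s+1}\}$-free graph on $n$ vertices and apply Proposition~\ref{prop2} to write $V(G) = V_1 \cup U \cup W$ with $V_1 = \{v_1, \dots, v_{p-1}\}$, $|U| \ge n - K$ and $N_G(u) = V_1$ for each $u \in U$, and $d_G(w) \ge p$ for each $w \in W$. Since $U$-vertices are not adjacent to $W$, a direct edge count together with $e(V_1) \le \binom{p-1}{2}$ and $|U| = n - p + 1 - |W|$ reduces the target bound to $e(V_1, W) + e(W) \le (p-1)|W|$. Every $w \in W$ has $N_G(w) \subseteq V_1 \cup (W \setminus \{w\})$ of size at least $p$, so $|W| \ne 1$, and the case $W = \emptyset$ is trivial.

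Assume $|W| \ge 2$. If every $w \in W$ satisfies $N_G(w) \cap V_1 = \emptyset$, then $e(V_1, W) = 0$ and $e(W) \le \ex(|W|, F) \le (p-1)|W|$ by the Erd\H os--S\'os hypothesis, giving the bound directly. Otherwise pick $w_2 \in W$ with $N_G(w_2) \cap V_1 \ne \emptyset$; since $|N_G(w_2)| \ge p > |V_1|$, $w_2$ has a $W$-neighbor $w_1$. I would then embed $F$ in $G$ as follows: using the balanced bipartition $A \sqcup B$ with $|A| = |B| = p$, pick a leaf $a^* \in A$ whose $F$-neighbor $b^* \in B$ has degree exactly $2$ (the second vertex on any longest path of a perfect-matching tree has degree $2$, since otherwise it would be adjacent to two distinct leaves, contradicting the uniqueness of its matching partner), set $\phi(a^*) = w_1$ and $\phi(b^*) = w_2$, bijectively map $A \setminus \{a^*\}$ onto $V_1$ sending $b^*$'s unique other $F$-neighbor into $N_G(w_2) \cap V_1$ (possible since that set is nonempty), and injectively embed $B \setminus \{b^*\}$ into $U$. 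Each $F$-edge then maps to the designated edge $w_1 w_2$, to a $V_1$-$w_2$ edge (by the bijection), or into the complete bipartite graph $V_1 \times U$, so $F \subseteq G$, contradicting $F$-freeness. The hard part will be isolating the correct leaf $a^*$ via the longest-path argument so that only a single $V_1$-constraint has to be honored in the bijection; the Erd\H os--S\'os hypothesis on $F$ is used once, in the pathological subcase where $W$ is isolated from $V_1$.
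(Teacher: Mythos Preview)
Your argument follows the paper's route closely: both take $G(n,p-1)$ for the lower bound, invoke Proposition~\ref{prop2} to obtain the structure $V_1\cup U\cup W$, rule out edges between $V_1$ and the exceptional set by embedding $F$, and finish with the Erd\H os--S\'os bound on the exceptional set. The one substantive difference is in the embedding step. You interpret ``balanced'' as ``has a perfect matching'' and use this to find a leaf $a^*$ whose neighbour $b^*$ has degree exactly $2$; the paper instead defines balanced as $|V(F)|=2p(F)$ and locates a vertex $x$ with exactly one non-leaf neighbour and at most $p-1$ leaf neighbours (take a leaf of the tree obtained from $F$ by deleting all leaves). The paper's vertex $x$ exists in every balanced tree, whereas your $b^*$ fails e.g.\ for the double star $S_{2,2}$, where every leaf's neighbour has degree $3$. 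That said, your narrower reading is exactly what the lower bound needs: $G(n,p-1)$ is $F$-free iff $F$ has no vertex cover of size $p-1$, which by K\"onig is precisely the perfect-matching condition---so the paper's lower-bound construction tacitly requires the same hypothesis you made explicit.
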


The above proposition determines $\ex(n,\{P_{2\ell},M_{s+1}\})$ for sufficiently large $n$. We can also deal with odd paths.

\begin{prop}\label{prop4} Let $2\le\ell\le s$.
    If $\ell$ divides $s-\ell+1$, then for sufficiently large $n$ we have that $\ex(n,\{P_{2\ell+1},M_{s+1}\})=(\ell-1)(n-2s+\ell-1)+\binom{\ell-1}{2}+(s-\ell+1)(2\ell-1)$. If $\ell$ does not divide $s-\ell+1$, then let $t:=\lfloor(s-\ell+1)/\ell\rfloor$. For sufficiently large $n$, we have that $\ex(n,\{P_{2\ell+1},M_{s+1}\})=(\ell-1)(n-\ell+1-2\ell t)+1+\binom{\ell-1}{2}+t\binom{2\ell}{2}$.
\end{prop}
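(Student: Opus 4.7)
The plan is to combine the structural information from Proposition~\ref{prop2} (applied with $p(P_{2\ell+1})=\ell$) with a careful analysis of the components of the ``exceptional'' part. For the lower bound, in both cases I start with a clique $A$ on $\ell-1$ apex vertices completely joined to a large independent set of ``typical'' vertices. In Case 1, on the remaining $2(s-\ell+1)$ vertices I place $m=(s-\ell+1)/\ell$ disjoint copies of $K_{2\ell}$, not connected to $A$; the graph is easily seen to be $\{P_{2\ell+1},M_{s+1}\}$-free with matching exactly $s$, and a direct computation matches the formula. In Case 2 I do the same with $t=\lfloor(s-\ell+1)/\ell\rfloor$ copies of $K_{2\ell}$ and one extra edge inserted inside the independent part, whose two endpoints then become fully $A$-adjacent; the remainder $r:=s-\ell+1-\ell t\ge 1$ keeps the matching at most $s$, and again the edge count agrees.

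For the upper bound let $G$ be an extremal $\{P_{2\ell+1},M_{s+1}\}$-free graph satisfying Proposition~\ref{prop2}. Set $A=\{v_1,\dots,v_{\ell-1}\}$, let $B$ be the set of vertices with neighborhood exactly $A$, and $C=V(G)\setminus(A\cup B)$; then $|B|\ge n-K$, every $c\in C$ has $d_G(c)\ge\ell$, and $B$ is independent with all its edges into $A$. Hence
\[
|E(G)|=e(A)+e(A,C)+e(C)+(\ell-1)(n-\ell+1-|C|),
\]
and since $e(A)\le\binom{\ell-1}{2}$ it suffices to bound $\Phi:=e(A,C)+e(C)-(\ell-1)|C|$ by $s-\ell+1$ in Case 1 and by $\ell t+1$ in Case 2.

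The central step is an extension argument. If a component $C_0$ of $G[C]$ contains a vertex $c$ with a neighbor in $A$, then any $k$-vertex path in $C_0$ starting at $c$ can be extended inside $G$ by alternating the $\ell-1$ vertices of $A$ with $\ell-1$ fresh vertices of $B$, giving a path on $k+2\ell-2$ vertices; so $P_{2\ell+1}$-freeness forces $k\le 2$. Combined with $d(c')\ge\ell$ for every $c'\in C$, this forces every $A$-adjacent component of $G[C]$ to consist of exactly two vertices joined by an edge, each fully adjacent to $A$: singletons would fail the degree condition, and from any leaf of a longer star one could extract a $3$-vertex path. A two-sided variant of the extension (splitting $A$ between the two sides of a path going through two such components) then shows that two distinct ``extra-edge'' components already span a $P_{2\ell+1}$, so at most one can exist. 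For a component $C_0$ of $G[C]$ with no edge to $A$, the minimum degree in $G[C_0]$ is at least $\ell$, and the classical lower bound on the length of a longest path in a connected graph with prescribed minimum degree forces $|C_0|\le 2\ell$; consequently $e(C_0)-(\ell-1)|C_0|=|C_0|(|C_0|-2\ell+1)/2\le\ell$, with equality exactly when $C_0=K_{2\ell}$.

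To finish, write $j$ for the number of $K_{2\ell}$-components of $G[C]$ and $\epsilon\in\{0,1\}$ for the presence of an extra-edge component; the previous paragraph gives $\Phi\le j\ell+\epsilon$. A matching of $G$ obtained by saturating $A$ through $A$-$B$ edges and combining with a maximum matching of $G[C]$ yields $\nu(G[C])\le s-\ell+1$, i.e.\ $j\ell+\epsilon\le s-\ell+1$. In Case 1 this is maximized at $(j,\epsilon)=(m,0)$, giving $\Phi\le s-\ell+1$; in Case 2 the remainder $r\ge 1$ makes $\epsilon=1$ feasible, and $j\ell\le s-\ell$ forces $j\le t$, giving $\Phi\le \ell t+1$. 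Substituting back recovers the two formulas. The main obstacle I expect is the extension lemma, in particular the rigorous derivation that $A$-adjacent components collapse to a single edge with both endpoints fully joined to $A$: this requires iterating the $k\le 2$ bound (applied to leaves as well as the center of a star) together with the $\ge\ell$ degree condition, and then checking carefully that the alternating paths used in the extension really exist inside $G$ (which is where the constant $K$ from Proposition~\ref{prop2} matters, ensuring $|B|$ is much larger than $\ell$).
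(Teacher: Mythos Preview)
Your proof is correct and takes a genuinely different route from the paper's. The paper argues by \emph{modification}: starting from the extremal graph furnished by Proposition~\ref{prop2}, it repeatedly replaces components of $U'$ by vertices attached to $U$ whenever this does not lose edges, invoking the Erd\H os--Gallai path theorem, Kopylov's theorem on connected $P_k$-free graphs, and the Erd\H os--Gallai matching theorem to handle the various cases, until $U'$ consists only of vertex-disjoint copies of $K_{2\ell}$. You instead bound the surplus $\Phi=e(A,C)+e(C)-(\ell-1)|C|$ directly: the degree condition $d(c)\ge\ell$ from Proposition~\ref{prop2} together with the elementary minimum-degree longest-path lemma forces every non-$A$-adjacent component of $G[C]$ to have at most $2\ell$ vertices, while your path-extension argument pins the $A$-adjacent components down to at most one edge fully joined to $A$. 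This avoids Kopylov's theorem altogether and gives a cleaner counting argument; the paper's approach, on the other hand, explicitly exhibits the extremal configurations.

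One small wrinkle: when you set ``$j$ = number of $K_{2\ell}$-components'' and write $\Phi\le j\ell+\epsilon$, you are tacitly using that every \emph{other} non-$A$-adjacent component contributes nonpositively to $\Phi$. This is true --- components on at most $2\ell-1$ vertices give $\binom{m}{2}-(\ell-1)m\le 0$, and a non-complete $2\ell$-vertex component still has minimum degree $\ge\ell$ and hence a perfect matching, so absorbing it into $j$ only strengthens both inequalities --- but it would be cleaner to let $j$ count all $2\ell$-vertex components of $G[C]$ rather than only the complete ones.
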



\section{Proofs}

Let us start with the proof of Theorem \ref{thm1} that we restate here for convenience.

\begin{thm*}
If $\chi(F)>2$ and $n$ is large enough, then $\ex(n,\{F,M_{s+1}\})=\ex(s,\cF)+s(n-s)$, where $\cF$ is the family of graphs obtained by deleting an independent set from $F$.
\end{thm*}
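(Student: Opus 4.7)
The plan is to show that $\ex(n,\{F,M_{s+1}\})=\ex(s,\cF)+s(n-s)$ by establishing matching lower and upper bounds.

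\textbf{Lower bound.} I would take a set $A$ of $s$ vertices carrying an extremal $\cF$-free graph with $\ex(s,\cF)$ edges, an independent set $B$ of $n-s$ vertices, and all edges between $A$ and $B$. This graph is $\{F,M_{s+1}\}$-free: the matching number is $s$ since $B$ is independent, while any embedded copy of $F$ would place an independent set $I\subseteq V(F)$ into $B$; extending $I$ to any color class $C$ of $F$ yields a copy of $F-C\in\cF$ inside $G[A]$, contradicting the choice of $G[A]$.

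\textbf{Upper bound setup.} Let $G$ be edge-maximal $\{F,M_{s+1}\}$-free; by the lower bound $|E(G)|\ge\ex(s,\cF)+s(n-s)$. This forces the matching number of $G$ to equal $s$ (the Erd\H os--Gallai bound for smaller matching number is at most $(s-1)n+O(1)$, too small for $n$ large). Fix a maximum matching $M$ and write $U:=V(M)$, $W:=V\setminus U$, so $|U|=2s$ and $W$ is independent. For each $xy\in M$, the non-existence of an augmenting path $w_1,x,y,w_2$ with distinct $w_1,w_2\in W$ forces one of $N_W(x)=\emptyset$, $N_W(y)=\emptyset$, or $N_W(x)=N_W(y)=\{w\}$. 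Labeling endpoints so that $N_W(y)\subseteq N_W(x)$ and letting $A$ be the set of these ``high'' endpoints, $Y:=V(M)\setminus A$, and $B:=V\setminus A=Y\cup W$, we obtain $|A|=|Y|=s$ and $d_W(y)\le 1$ for every $y\in Y$.

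\textbf{Key step: $G[A]$ is $\cF$-free.} Suppose $G[A]$ contains $F'=F-C\in\cF$ via an embedding $\phi:V(F)\setminus C\to A$. The plan is to extend $\phi$ to a copy of $F$ in $G$ by sending each $c\in C$ to a distinct $w_c\in W$ adjacent to every vertex in $\phi(N_F(c))$. The candidate set
\[
S_c := \bigcap_{a\in\phi(N_F(c))} N_W(a) \subseteq W
\]
has size at least $|W|-D$, where $D:=s|W|-e(A,W)$. Crude bounds on the remaining parts of $E(G)$ give $e(A,W)\ge|E(G)|-2s^2$, hence $D\le sn-|E(G)|\le s^2-\ex(s,\cF)$. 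For $n$ large this makes $|S_c|\ge|C|$ for every $c$, so Hall's condition for a system of distinct representatives is satisfied and an embedding of $F$ into $G$ results---a contradiction. Hence $e(G[A])\le\ex(s,\cF)$. This extension step is the main obstacle.

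\textbf{Deficit argument.} For each $y\in Y$ with a neighbor in $Y\cup W$, a similar augmenting-path analysis (using $w,x_y,y,y',x_{y'},w'$ or $w,x_y,y,w'$) forces $d_W(x_y)\le 1$. Let $Y'$ collect such $y$; each corresponding $x_y$ then contributes at least $|W|-1$ to the deficit $D':=s(n-s)-e(A,B)$, giving $D'\ge|Y'|(n-2s-1)$. On the other hand $e(G[B])=e(G[Y])+e(Y,W)\le\binom{|Y'|}{2}+|Y'|$, and for $n$ large $D'$ dominates $e(G[B])$. Combining,
\[
|E(G)|=e(G[A])+e(A,B)+e(G[B])\le\ex(s,\cF)+\bigl(s(n-s)-D'\bigr)+e(G[B])\le\ex(s,\cF)+s(n-s),
\]
matching the lower bound.
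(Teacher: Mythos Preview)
Your overall strategy (lower bound construction; pick a maximum matching, split its endpoints into a ``high'' set $A$ and a ``low'' set $Y$, show $G[A]$ is $\cF$-free by an SDR/common-neighbour extension, then run a deficit argument) is different from the paper's proof, which instead orders the vertices by degree, shows the top $s$ vertices all have degree at least $n-3s^2$ by a counting argument, and then uses a simple greedy-matching argument to conclude there are \emph{no} edges outside this top-$s$ set. The paper never needs to analyse augmenting paths of length $>3$ or run a deficit comparison; it gets the structure ``$s$ near-universal vertices plus nothing else'' in one shot.

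Your argument is almost complete, but the deficit step has a genuine gap. You assert that for every $y\in Y'$ one has $d_W(x_y)\le 1$, citing the augmenting path $w,x_y,y,y',x_{y'},w'$. This path requires a vertex $w'\in N_W(x_{y'})$, and nothing you have proved so far rules out $N_W(x_{y'})=\emptyset$. Concretely, with $s=2$ take matching edges $a_1y_1,a_2y_2$, add $a_1w_1,a_1w_2$ and $y_1y_2$, and nothing else; this graph is $M_3$-free, yet $y_1\in Y'$ while $d_W(x_{y_1})=d_W(a_1)=2$. So the implication ``$y\in Y'\Rightarrow x_y\in A_0$'' fails in general, and with it your bound $D'\ge|Y'|(n-2s-1)$.

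The fix is short and actually makes the deficit argument unnecessary. From $e(A,W)\ge |E(G)|-2s^2\ge s(n-s)-2s^2$ and $e(A,W)\le (s-|A_0|)|W|+|A_0|$ you get $|A_0|(|W|-1)\le s^2$, hence $A_0=\emptyset$ once $n$ is large. Now every $x_{y'}$ has $d_W(x_{y'})\ge 2$, so your length-$5$ augmenting path \emph{does} exist whenever some $y\in Y$ has a neighbour in $Y$; combined with $d_W(y)=0$ for all $y\in Y$ (Step~3, since $A_0=\emptyset$), this forces $e(G[B])=0$ outright, and $|E(G)|=e(G[A])+e(A,B)\le \ex(s,\cF)+s(n-s)$ follows immediately.
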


\begin{proof}
Let $G_0$ be an $s$-vertex $\cF$-free graph with $\ex(s,\cF)$ edges. Let us add $n-s$ new vertices and connect each of them to each vertex of $G_0$. The resulting graph is clearly $M_{s+1}$-free, since $s$ vertices are incident to all the edges, and $F$-free by the definition of $\cF$. This gives the lower bound.


To show the upper bound, consider an $\{F,M_{s+1}\}$-free $n$-vertex graph $G$. Let $v_1,\dots,v_n$ be the vertices of $G$ in decreasing order of their degrees. Observe that $d(v_{s+1})\le 2s$. Indeed, otherwise we can pick greedily a matching $M_{s+1}$ the following way. In step $i$, we pick $v_i$ and a neighbor of $v_i$ we have not picked earlier. This way we have at most $2i-2$ forbidden neighbors, thus we can pick a new one even at step $s+1$, a contradiction. 

Observe also that $G$ has at most $\sum_{i=1}^{2s}d(v_i)\le \sum_{i=1}^{s}d(v_i)+2s^2$ edges. Indeed, the at most $2s$ vertices of a largest matching are incident to every edge, and $2s$ vertices are incident to at most $\sum_{i=1}^{2s}d(v_i)$ edges. The upper bound on this quantity follows from $d(v_{s+1}),\dots,d(v_{2s})\le 2s$.

We claim that $d(v_s)\ge n-3s^2$. Indeed, otherwise $\sum_{i=1}^{s}d(v_i)+2s^2\le (s-1)(n-1)+n-s^2\le s(n-s)$ and we are done. This implies that $v_1,\dots,v_s$ have at least $n-s-3s^3$ common neighbors. Let $U=\{v_1,\dots,v_s\}$. Observe that $G[U]$ is $\cF$-free, otherwise we would find an $F$ by picking at most $|V(F)|$ of their common neighbors as the missing independent set. 

We claim that there is no edge outside $U$. Indeed, otherwise we could find $M_{s+1}$ greedily as earlier: first we pick the edge outside $U$, and then in step $i+1$, we pick $v_i$ and a neighbor of $v_i$ we have not picked earlier. This is doable since $v_i$ has at least $n-3s^2\ge 2i$ neighbors. The number of edges is at most $\ex(s,\cF)+s(n-s)$, where the first term is an upper bound on the number of edges inside $U$, while the second term is an upper bound on the number of edges with one endpoint inside $U$ and the other endpoint outside $U$. This completes the proof.
\end{proof}

Let us continue with the proof of Proposition \ref{prop2} that we restate here for convenience.

\begin{prop*} If $F$ is bipartite and $p=p(F)\le s$, then
$\ex(n,\{F,M_{s+1}\})=(p-1)n+O(1)$. Moreover, there is a $K=K(F,s)$ such that for any $n$, there is an $n$-vertex $\{F,M_{s+1}\}$-free graph with $|E(G)|=\ex(n,\{F,M_{s+1}\})$ that has vertices $v_1,\dots,v_{p-1}$ and at least $n-K$ vertices $u$ such that the neighborhood of $u$ is $\{v_1,\dots,v_{p-1}\}$. Furthermore, the vertices with neighborhood different from $\{v_1,\dots,v_{p-1}\}$ each have degree at least $p$.
\end{prop*}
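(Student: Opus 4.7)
The plan is to prove the upper bound and the structural claim together via a direct extremal argument. The lower bound comes from $K_{p-1,n-p+1}$, which is $F$-free (its smaller color class has only $p-1<p$ vertices) and $M_{s+1}$-free (its matching number is $p-1\le s$), giving $(p-1)(n-p+1)$ edges.

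For the upper bound, fix an $\{F,M_{s+1}\}$-free graph $G$. Since $G$ has no matching of size $s+1$, the endpoints of a maximum matching form a set $U$ with $|U|\le 2s$ covering every edge, and $V(G)\setminus U$ is independent. Because $F\subseteq K_{p,|V(F)|-p}$, for any $S\subseteq U$ with $|S|=p$ the number of vertices $v\in V(G)\setminus U$ with $S\subseteq N(v)$ is at most $|V(F)|-p-1$; otherwise the $p$ vertices of $S$ together with $|V(F)|-p$ of their common neighbors would contain $F$. Double counting the pairs $(v,S)$ then gives
\[
\sum_{v\in V(G)\setminus U}\binom{d(v)}{p}\le\binom{2s}{p}(|V(F)|-p-1)=O(1),
\]
so all but $O(1)$ vertices of $V(G)\setminus U$ have degree at most $p-1$. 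Summing the contributions of these exceptional vertices, the edges inside $U$, and the typical vertices yields $e(G)\le(p-1)n+O(1)$.

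For the structural claim, any extremal $G$ must saturate the above bound up to a constant, which forces $n-O(1)$ vertices in $V(G)\setminus U$ to have degree exactly $p-1$. Pigeonholing over the $\binom{|U|}{p-1}=O(1)$ possible $(p-1)$-subsets, some specific $S=\{v_1,\dots,v_{p-1}\}\subseteq U$ is the neighborhood of $\Omega(n)$ of them; call this twin class $A_S$. I would then modify $G$ one vertex at a time: whenever a vertex $v\in V(G)\setminus S$ satisfies $d(v)<p$ and $N(v)\ne S$, replace $N(v)$ by $S$. Since $|S|\ge d(v)$, this never decreases the number of edges, so the modified graph remains extremal provided $\{F,M_{s+1}\}$-freeness is preserved.

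The main obstacle is verifying this freeness. The plan is a twin-replacement argument: any hypothetical copy of $F$ or $M_{s+1}$ in the modified graph that uses the altered vertex $v$ can be rerouted by swapping $v$ for some unused $u\in A_S$, producing a forbidden subgraph already present in the original $G$. Such a $u$ exists as long as $|A_S|>\max\{|V(F)|,2s+1\}$, which holds for $n$ larger than a constant depending on $F$ and $s$ and persists throughout the iteration, since $A_S$ only grows. When the process terminates, $n-O(1)$ vertices have neighborhood exactly $\{v_1,\dots,v_{p-1}\}$, and by the termination condition any vertex of $V(G)\setminus S$ with $N(v)\ne S$ satisfies $d(v)\ge p$; the $v_i$ themselves have degree at least $|A_S|\ge p$ for $n$ large. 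For the small-$n$ range where the argument does not apply, one takes $K\ge n$ and the structural claim is vacuous.
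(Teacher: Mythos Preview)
Your proposal is correct and follows essentially the same approach as the paper: bound high-degree vertices outside a matching cover via the $K_{p,|V(F)|-p}$ common-neighbor argument, pigeonhole to find a large twin class $A_S$, then perform a twin-replacement modification. Your iterative replacement of \emph{all} low-degree non-twin vertices (rather than only those outside the matching cover, as the paper does) is a slight refinement that more directly yields the final ``degree at least $p$'' claim.
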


\begin{proof}
The lower bound is given by $K_{p-1,n-p+1}$, or by Construction 1 or Construction 2.

Let $G$ be an $n$-vertex $\{F,M_{s+1}\}$-free graph. 
Let $U$ denote the set of at most $2s$ vertices of a largest matching, then every edge of $G$ is incident to at least one vertex of $U$. Every $p$-set in $U$ has less than $q:=|V(F)|-p$ common neighbors. As there are at most $\binom{2s}{p}$ $p$-sets in $U$, there are at most $\binom{2s}{p}(q-1)$ vertices outside $U$ that are adjacent to at least $p$ sets. 

Let $W$ denote the set of the other at least $n-\binom{2s}{p}(|V(F)|-p)-2s$ vertices outside $U$. Then vertices of $W$ have degree at most $p-1$. 
Note that by choosing $K$ sufficiently large, we can assume that $n$ is sufficiently large. In particular, if at most $\binom{2s}{p-1}\max\{|V(F)|,2s\}$ vertices in $W$ with degree $p-1$, then the number of edges is at most  $(p-2)n+O(1)$ and we are done. Otherwise, at least $\max\{|V(F)|,2s\}$ vertices of $W$ have the same $p-1$ neighbors $v_1,\dots,v_{p-1}$.

For any other vertex of $W$, we change its neighborhood to $v_1,\dots,v_{p-1}$ to obtain $G'$. If $G'$ contained $F$ or $M_{s+1}$, that would contain some of the vertices whose neighborhood was changed. But they could be replaced by vertices with the same neighborhood already in $G$, to obtain $F$ or $M_{s+1}$ in $G$. Therefore, $G'$ is $\{F,M_{s+1}\}$-free. Clearly $|E(G')|\ge |E(G)|$, hence if $G$ has $\ex(n,\{F,M_{s+1}\})$ edges, then so does $G'$. It is easy to see that $G'$ has $(p-1)n+O(1)$ edges and the desired additional property.
\end{proof}
Let us continue with the proof of Proposition \ref{prop3} that we restate here for convenience.
\begin{prop*}
Let $F$ be a balanced tree, i.e., $|V(F)|=2p(F)$ and let $p(F)\le s$. Assume that the Erd\H os-S\'os conjecture holds for $F$. Then for sufficiently large $n$, we have $\ex(n,\{F,M_{s+1}\})=(p-1)(n-p+1)+\binom{p-1}{2}$.    
\end{prop*}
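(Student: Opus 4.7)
The lower bound is realized by $K_{p-1}\vee\overline{K_{n-p+1}}$, which has matching number $p-1\le s$ and exactly $(p-1)(n-p+1)+\binom{p-1}{2}$ edges; it is $F$-free because any bipartite embedding of $F$ puts at most one vertex per bipartition class into the clique $K_{p-1}$, forcing at least $2p-2$ vertices of $F$ to land in the independent set $\overline{K_{n-p+1}}$ and hence an edgeless subgraph among them, which is impossible once $F$ has no vertex cover of size at most $2$.

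For the upper bound, let $G$ be an extremal $\{F,M_{s+1}\}$-free graph. Apply Proposition~\ref{prop2} to assume $G$ contains hubs $U=\{v_1,\ldots,v_{p-1}\}$, a set $W$ of at least $n-K$ vertices whose neighborhood is exactly $U$, and an exceptional set $R$ of size at most $K$ whose vertices have degree at least $p$. Decomposing
\[|E(G)|=|E(G[U])|+(p-1)|W|+e(U,R)+|E(G[R])|\le\binom{p-1}{2}+(p-1)|W|+e(U,R)+|E(G[R])|\]
and using $(p-1)|W|=(p-1)(n-p+1-|R|)$, the upper bound reduces to the inequality $e(U,R)+|E(G[R])|\le(p-1)|R|$.

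To establish this, I combine a leaf-embedding argument with the Erd\H os--S\'os conjecture. Pick any leaf $\ell$ of $F$ with unique neighbor $n_\ell$ of degree $d=d_F(n_\ell)$. For every edge $\{a,b\}\in E(G[R])$, if $|N(b)\cap U|\ge d-1$, then $F$ embeds into $G$ via $\ell\mapsto a$, $n_\ell\mapsto b$, the remaining $p-1$ vertices of $\ell$'s bipartition class bijectively into $U$ (so the $d-1$ neighbors of $n_\ell$ land on $U$-neighbors of $b$), and the opposite class into $W$; all required edges lie in the $U$-to-$W$ join, at $\{a,b\}$, or at the chosen bijection. This contradicts $F$-freeness, so $|N(a)\cap U|,|N(b)\cap U|\le d-2$. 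Since every $v\in R$ has degree at least $p>|U|$, it must be incident to some $G[R]$-edge, yielding $|N(v)\cap U|\le d_{\min}(F)-2$ for all $v\in R$, where $d_{\min}(F):=\min_\ell d_F(n_\ell)$. Applying Erd\H os--S\'os to $G[R]$ gives $|E(G[R])|\le(p-1)|R|$, and this suffices when $d_{\min}(F)=2$, since then $e(U,R)=0$. The main obstacle is the case $d_{\min}(F)>2$, where the naive leaf argument leaves slack; I expect to handle it by iteratively embedding a longer pendant subtree of $F$, using multiple $G[R]$-edges (which the degree lower bound forces to exist in abundance) to absorb the extra neighborhood requirement, and then invoking Erd\H os--S\'os on the smaller residual subtree.
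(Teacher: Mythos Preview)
Your upper bound argument has a genuine gap: the case $d_{\min}(F)>2$ is not handled, and your one-sentence plan (``iteratively embedding a longer pendant subtree'') is not a proof. This case does occur for balanced trees. For instance, take the path $v_1v_2v_3v_4$ and attach a pendant leaf at each of $v_2$ and $v_3$; this tree has $p=3$, both of its leaf-neighbors have degree $3$, and so $d_{\min}=3$. For such $F$ your bound gives only $e(U,R)\le |R|$ and $|E(G[R])|\le (p-1)|R|$, which together overshoot the target $(p-1)|R|$ by $|R|$.

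The paper closes this gap not by refining your leaf-embedding, but by choosing a different pivot in $F$ and proving the stronger statement $e(U,U')=0$ outright. Delete all leaves of $F$ to obtain a tree $F'$; since $F$ is balanced with $p\ge 2$, $F'$ has at least two leaves, and $F$ has at most $2p-2$ leaves, so some leaf $x$ of $F'$ has between $1$ and $p-1$ leaf-neighbors in $F$ and exactly one non-leaf neighbor $y$. Given any edge $v_iu$ with $v_i\in U$ and $u\in U'$, the paper embeds $x\mapsto u$, $y\mapsto v_i$, the leaf-neighbors of $x$ into $N(u)\setminus\{v_i\}$ (which has size at least $p-1$ since $\deg(u)\ge p$), the rest of $y$'s side into $U$, and the rest of $x$'s side into $W$. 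The point is that the leaf-neighbors of $x$ have no edges except to $x$, so their images need only be adjacent to $u$, not to $U$; this is exactly what your embedding of $\ell\mapsto a$, $n_\ell\mapsto b$ cannot exploit, because you are forced to route the remaining $d-1$ edges at $n_\ell$ through $N(b)\cap U$. Once $e(U,U')=0$, Erd\H os--S\'os on $G[U']$ finishes as you wrote.

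A minor issue: your lower bound justification is incorrect. The assertion that an embedding of $F$ into $K_{p-1}\vee\overline{K_{n-p+1}}$ ``puts at most one vertex per bipartition class into the clique'' is false; nothing prevents several vertices from the same colour class of $F$ landing in $K_{p-1}$. The correct argument is simply that at most $p-1$ vertices of $F$ can land in the clique, forcing at least $p+1$ vertices into the independent part, which would be an independent set of $F$ of size $p+1>\alpha(F)=p$.
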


\begin{proof}
    The lower bound is given by Construction 1, which is $G(n,p-1)$ in this case. Indeed, if we delete $p-1$ vertices in one of the parts of $F$ and leave only a leaf, then the resulting graph is a single edge and some isolated vertices. As $\cF_1$ contains this graph, $\ex(n-p+1,\cF_1)=0$.

    For the upper bound, let $G$ be a graph ensured by Proposition \ref{prop2}. Thus, $G$ has $n$ vertices, $\ex(n,\{F,M_{s+1}\})$ edges, $G$ is $\{F,M_{s+1}\}$-free, and $G$ contains a set $U=\{v_1,\dots,v_{p-1}\}$ such that all but $K$ vertices have neighborhood $U$. Let $W$ denote the set of vertices with neighborhood $U$ and $U':=V(G)\setminus (U\cup W)$. 
    There is no edge inside $W$ by definition. 
    
    \begin{clm}
    There is no edge between $U$ and $U'$.
    \end{clm}

    \begin{proof}
        First we show that if $F\neq K_2$, then $F$ has a vertex $x$ that is adjacent to at least one, but at most $p-1$ leaves and exactly one neighbor of degree greater than 1. Indeed, let $F'$ be the graph we obtain by deleting the leaves of $F$, then $F'$ has at least two leaves. Those vertices in $F$ have one neighbor of degree greater than 1 and at least 1 leaf neighbor. As there are at most $2p-2$ leaves in $F$, at least one of these two vertices have at most $p-1$ leaf neighbors.

        Assume that $v_iu$ is an edge between $U$ and $U'$ and let $u'$ be a neighbor of $u$ outside $U$ (this exists otherwise $u\in W$).
        Now we map $x$ to $u$ its non-leaf neighbor to $v_i$, and we map the leaf neighbors of $x$ to $u'$ and $p-2$ other neighbors of $u$. We map the remaining vertices of the part of $F$ containing these leaves to arbitrary vertices in $U$, and the remaining vertices of the other part of $F$ to arbitrary vertices in $W$. This way we find a copy of $F$ in $G$, a contradiction.
    \end{proof}
    
    

    
   Let us return to the proof of the proposition. Since the Erd\H os-S\'os conjecture holds for $F$, we have $\ex(|U'|,F)\le (p-1)|U'|$, thus there are at most $(p-1)|U'|$ edges inside $U'$. Then $|E(G)|\le \binom{p-1}{2}+(p-1)(n-p+1-|U'|)+\ex(|U'|,F)\le \binom{p-1}{2}+(p-1)(n-p+1)$, completing the proof. 
\end{proof}

We finish the paper with the proof of Proposition \ref{prop4} that we restate here for convenience.

\begin{prop*} Let $2\le\ell\le s$.
    If $\ell$ divides $s-\ell+1$, then for sufficiently large $n$ we have that $\ex(n,\{P_{2\ell+1},M_{s+1}\})=(\ell-1)(n-2s+\ell-1)+\binom{\ell-1}{2}+(s-\ell+1)(2\ell-1)$. If $\ell$ does not divide $s-\ell+1$, then let $t:=\lfloor(s-\ell+1)/\ell\rfloor$. For sufficiently large $n$, we have that $\ex(n,\{P_{2\ell+1},M_{s+1}\})=(\ell-1)(n-\ell+1-2\ell t)+1+\binom{\ell-1}{2}+t\binom{2\ell}{2}$.
\end{prop*}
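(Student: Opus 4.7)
The plan for the lower bound is to exhibit the claimed extremal construction in each case. Let $U=\{v_1,\ldots,v_{\ell-1}\}$ be a clique and let $W$ be a set of vertices each with neighborhood exactly $U$; on the remaining $2\ell t$ vertices put $t$ vertex-disjoint copies of $K_{2\ell}$, fully disconnected from $U\cup W$. In the second case, when $r:=s-\ell+1-t\ell\ge 1$, add two further vertices $u,u'$, each joined to all of $U$ and to each other (the ``$+1$'' edge). A direct count verifies the edge total; the matching number is $\ell-1+t\ell$ or $\ell+t\ell$, both $\le s$; and the longest path either lives inside a $K_{2\ell}$ (so has $\le 2\ell$ vertices) or inside $U\cup W\cup\{u,u'\}$, where interleaving $U$-vertices with one pair and $W$-vertices bounds it by $2\ell$.

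For the upper bound, apply Proposition~\ref{prop2} to an extremal graph $G$ to obtain a distinguished set $U=\{v_1,\ldots,v_{\ell-1}\}$, a set $W$ of at least $n-K$ vertices with neighborhood exactly $U$, and $U'=V(G)\setminus(U\cup W)$ of bounded size whose vertices all have degree $\ge p=\ell$. Since $W$ is independent and has no edges to $U'$, one can write
\begin{equation*}
E(G)=\binom{\ell-1}{2}+(\ell-1)|W|+E(U,U')+E(G[U']),
\end{equation*}
and substituting $|W|=n-\ell+1-|U'|$ and matching against the target formula reduces the problem to showing $E(U,U')+E(G[U'])-(\ell-1)|U'|\le t\ell$ in the first case, or $\le t\ell+1$ in the second.

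The heart of the proof is a structural analysis of $G[U']$. Partition $U'=U'_1\sqcup U'_2$, where $U'_1$ collects the $U'$-vertices with at least one neighbor in $U$. My first claim is that every $u\in U'_1$ is incident to at most one edge of $G[U']$, whose other endpoint also lies in $U'_1$, and that $N_U(u)=U$. The proof is by contradiction: a $G[U']$-path $u,u_1,u_2$ of length two, concatenated with the alternating walk on $U\cup W$ that begins $u,v_1,w_1,v_2,w_2,\ldots$ and uses all of $U$, would produce $P_{2\ell+1}$ in $G$; the degree lower bound $\deg_G(u)\ge\ell$ together with $|U|=\ell-1$ then pin down the remaining structure. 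So $U'_1$ is the vertex set of a matching of $m_1$ edges in $G[U']$, each of whose vertices is fully joined to $U$. The next claim is $m_1\le 1$: two pairs $(u_1,u_2),(u_3,u_4)$ yield the forbidden path $u_1,u_2,v_1,u_3,u_4,v_2,w_0,v_3,w_1,\ldots$ on $2\ell+1$ vertices by weaving both pairs through $U$ and then continuing with the alternating $U$-$W$ walk. Meanwhile $G[U'_2]$ is disconnected from $U\cup U'_1$, is $P_{2\ell+1}$-free, and has minimum degree $\ge\ell$; by Dirac's path theorem every component has $\le 2\ell$ vertices, and a short convexity check shows $e(X)-(\ell-1)|X|\le\ell$ for every component $X$, with equality only at $X\cong K_{2\ell}$.

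The final step is optimization. Any matching of $G[U']$ extends to a matching of $G$ via $\ell-1$ disjoint $U$-$W$ edges, so $m_1+\nu(G[U'_2])\le s-\ell+1$. Letting $j$ denote the number of $K_{2\ell}$-components of $G[U'_2]$ and noting that all other components contribute nonpositively, the quantity to bound is at most $m_1+j\ell$ subject to $m_1\in\{0,1\}$ and $m_1+j\ell\le s-\ell+1$. A case split on whether $\ell\mid s-\ell+1$ yields the maxima $t\ell$ (with $m_1=0,j=t$) and $t\ell+1$ (with $m_1=1,j=t$) respectively, matching the target formula exactly. The main technical obstacle is the pair of path-interleaving claims underlying the structural lemma and the bound $m_1\le 1$: these must be set up uniformly in $\ell\ge 2$, using alternations along $U\cup W$ to stretch short $U'$-paths to length $2\ell+1$; once they are in place, the rest is bookkeeping.
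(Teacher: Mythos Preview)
Your approach is genuinely different from the paper's. The paper proves the upper bound by repeatedly \emph{transforming} $G$: for each component of $G[U']$ that is not exactly a $K_{2\ell}$ containing an $M_\ell$ (using Kopylov's theorem for connected $P_k$-free graphs and the Erd\H os--Gallai bound), it reroutes those vertices into $W$ without losing edges, until $U'$ is a disjoint union of $K_{2\ell}$'s, and then counts. You instead keep $G$ fixed and do a direct structural analysis: split $U'$ by adjacency to $U$, prove via explicit path-interleavings that $G[U'_1]$ is a matching of size $m_1\le 1$ with each vertex fully joined to $U$, invoke the Dirac long-path theorem to bound component sizes in $U'_2$, and then optimise. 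Your route avoids Kopylov entirely and makes transparent why the ``$+1$'' appears exactly in the non-divisible case.

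There is, however, a real gap in the optimisation. You assert that ``all other components contribute nonpositively'' to $e(X)-(\ell-1)|X|$, where $j$ counts only $K_{2\ell}$-components. This is false for a $2\ell$-vertex component that is not complete: such an $X$ can have $e(X)-(\ell-1)\cdot 2\ell$ as large as $\ell-1>0$, and your convexity check only gives $\le\ell$ there, not $\le 0$. The fix is short but must be stated. Since every component of $G[U'_2]$ has minimum degree $\ge\ell\ge |X|/2$, Dirac's theorem makes it Hamiltonian, hence it already realises a matching of size $\lfloor|X|/2\rfloor$; completing it to $K_{|X|}$ creates neither $P_{2\ell+1}$ (still $\le 2\ell$ vertices) nor a larger matching, so extremality of $G$ forces every component to be a clique, and then components on fewer than $2\ell$ vertices do contribute nonpositively. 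Alternatively, simply let $j$ count all $2\ell$-vertex components: each still satisfies $e(X)-(\ell-1)|X|\le\ell$ and $\nu(X)=\ell$, while smaller components contribute $\le 0$, and your case split on $m_1\in\{0,1\}$ then goes through unchanged.
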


\begin{proof}
    The lower bounds are given by the following graphs. If $\ell$ divides $s-\ell+1$, then we take $G(n-2s+2\ell-2,\ell-1)$, and on the remaining $2s-2\ell+2$ vertices, we take $(s-\ell+1)/\ell$ copies of $K_{2\ell}$. Each component is $P_{2\ell+1}$-free, and the largest matching is of size $\ell-1$ in the large component, and of size $s-\ell+1$ in the clique components.

    If $\ell$ does not divide $s-\ell+1$, then we similarly take copies of $K_{2\ell}$ on at most $2s-2\ell+1$ vertices, i.e., we take $t$ copies. On the remaining $n-2\ell t$ vertices, we take $G(n-2\ell t,\ell-1)$ and add another edge. Again each component is $P_{2\ell+1}$-free, but this time the largest matching is of size $\ell$ in the large component. However, the remaining components have order $t2\ell<2s-2\ell+2$, thus the largest matching in those components have size at most $s-\ell$.

    Let us continue with the upper bounds. We apply Proposition \ref{prop2} to obtain an extremal $n$-vertex graph $G$ with vertices $U=\{v_1,\dots,v_{\ell-1}\}$, such that the set $W$ of vertices with neighborhood $U$ contains all but at most $K$ vertices. Moreover, the vertices of $U'=V(G)\setminus (U\cup W)$ have degree at least $\ell$. We will use multiple times the following simple observation: changing the neighborhood of a vertex $u$ to $U$ does not create $F$ or $M_{s+1}$. Indeed, we could replace the vertex $u$ in any forbidden configuration to any other common neighbor of the vertices of $U$ to create another copy without containing any of the new edges.

    There is no edge inside $W$ by definition. We claim that if there is an edge $uv$ with $u\in U$ and $v\in U'$, then the component $C$ of $v$ in $G[U']$ is a single edge. Indeed, $v$ has at least $\ell$ neighbors, thus a neighbor $w$ outside $U$, which must be in $U'$. If $w$ has another neighbor $w'$ in $U'$, then $u_1v_1u_2\dots u_{\ell-1}v_{\ell-1}vww'$ is a $P_{2\ell+1}$, where $u_i$ are arbitrary distinct elements of $W$ and we assumed $u=v_{\ell-1}$ without loss of generality. This implies that $C$ is a star with center $v$. But if $w$ has no other neighbor in $U'$, then it has a neighbor in $U$ (in fact $\ell-1$ neighbors), hence the component of $w$ in $G[U']$ (which is $C$) must be a star with center $w$.

    We also claim that there is at most one such edge component. Indeed, its vertices are joined to each vertex of $U$, thus two such edges $vw$ and $v'w'$ create a $P_{2\ell+1}$ of the form $v'w'v_1u_2\dots u_{\ell-1}v_{\ell-1}vw$ (where $u_i$ are arbitrary distinct elements of $W$).


    Consider a component $C$ of $U'$ that is not a single edge. If $C$ does not contain $P_{2\ell}$, then it contains at most $\ex(|V(C)|,P_{2\ell})=|V(C)|(\ell-1)$ edges. Then we can change the neighborhood of vertices in $C$ to $U$. The resulting graph is also $\{P_{2\ell+1},M_{s+1}\}$-free and the number of edges does not decrease. We apply these to all the $P_{2\ell}$-free components. In the resulting graph $G'$, every vertex of $U'$ is in a component containing a $P_{2\ell}$, in particular is the endvertex of a $P_{\ell+1}$ inside $U'$. As every vertex of $U$ is the endvertex of a $P_{2\ell-1}$ outside $U'$, an edge between $U$ and $U'$ would give a $P_{2\ell+1}$ in $G'$, a contradiction.

    Consider now a component $C$ of $G$ in $U'$ with $v>2\ell$ vertices. A theorem of Kopylov \cite{kopy} gives an upper bound on the number of edges inside $P_k$-free connected graphs. It shows that $|E(G[C])|\le \max\{\binom{2\ell-1}{2}+v-2\ell+1,|E(G(v,\ell-1))|+1\}\le v(\ell-1)$. Therefore, again, we can change the neighborhood of vertices in $C$ to $U$ without decreasing the number of edges.

    Consider now a component $C$ of $G$ in $U'$ with less than $2\ell$ vertices. Then $C$ has at most $|V(C)|(\ell-1)$ edges, thus again, we can change the neighborhood of vertices in $C$ to $U$ without decreasing the number of edges.

    Consider now a component $C$ of $G$ in $U'$ with $2\ell$ vertices that is $M_\ell$-free. By the Erd\H os-Gallai theorem, we know that $C$ contains at most $\binom{2\ell}{2}-\ell+1\le 2\ell(\ell-1)$ edges, thus again, we can change the neighborhood of vertices in $C$ to $U$ without decreasing the number of edges.

    We obtained that each component in $G[U']$ (except at most one component of order 2) has $2\ell$ vertices and contains a matching $M_\ell$, thus adding the missing edges inside that component would not increase the largest matching in $G$, nor it would create $P_{2\ell+1}$. Therefore, $U'$ consists of copies of $K_{2\ell}$. Clearly there are at most $t$ copies. Clearly, $2\ell$ vertices in a $K_{2\ell}$ add $\ell(2\ell-1)$ edges, while $2\ell$ vertices in $W$ add $2\ell(\ell-1)$ edges. Therefore, it is worth to pick the maximum number of $2\ell$-cliques. 

    If there is no component of order 2 in $G[U']$
    or $\ell$ does not divide $s-\ell+1$, then we are done. In the remaining case, we can only add $t-1$ copies of $K_{2\ell}$. Compared to this graph, we can delete $2\ell$ vertices from $W$ including the endvertices of the extra edge from $G$ and add one more copy of $K_{2\ell}$. This way we removed $2\ell(\ell-1)+1$ edges and added $\ell(2\ell-1)$ edges without creating $F$ or $M_{s+1}$. The number of edges increases, a contradiction completing the proof.
\end{proof}

\bigskip

\textbf{Funding}: Research supported by the National Research, Development and Innovation Office - NKFIH under the grants SNN 129364, FK 132060, and KKP-133819.


\begin{thebibliography}{99}

\bibitem{ahs}
 H. L. Abbott, F. Hanson, N. Sauer. Intersection theorems for systems of sets. \textit{Journal of Combinatorial Theory, Series A}, \textbf{12}(3), 381--389, 1972.

\bibitem{af} N. Alon, P. Frankl, Tur\'an graphs with bounded matching number, \textit{arXiv preprint}, arXiv:2210.15076


\bibitem{ch} V. Chv{\'a}tal, D. Hanson, Degrees and matchings. \textit{Journal of Combinatorial Theory, Series B}, \textbf{20}(2), 128--138, 1976.

\bibitem{erd} P. Erd\H os, Extremal problems in graph theory. In \textit{Theory of graphs and its
applications, Proc. Sympos. Smolenice}, 29--36, 1964.

\bibitem{eg} P. Erd\H os, T. Gallai, On maximal paths and circuits of graphs, \textit{Acta Math. Acad.
Sci. Hungar.}, \textbf{10}, 337--356, 1959.

\bibitem{kopy} G. N. Kopylov, Maximal paths and cycles in a graph, \textit{Dokl. Akad. Nauk
SSSR}, \textbf{234}, no. 1, 19--21, 1977. (English translation: \textit{Soviet Math. Dokl.} \textbf{18}, no. 3, 593--596, 1977.)

\bibitem{sim} M. Simonovits. A method for solving extremal problems in graph theory, stability
problems. \textit{Theory of Graphs, Proc. Colloq., Tihany, 1966, Academic Press, New
York}, 279--319, 1968.

\bibitem{T}
P. Tur\'an. Egy gr\'afelm\'eleti sz\'els\H o\'ert\'ekfeladatr\'ol. \textit{Mat. Fiz. Lapok}, \textbf{48}, 436--452, 1941.

\end{thebibliography}
\end{document}